\newcommand{\C}{\mathbb{C}}
\newcommand{\QQ}{\mathbb{Q}}
\newcommand{\NN}{\mathbb{N}}
\newcommand{\PP}{\mathbb{P}}
\newcommand{\OO}{\mathcal O}
\newcommand{\Ss}{\mathcal S}
\newcommand{\NNN}{\mathcal N}
\newcommand{\MM}{\mathcal M}
\newcommand{\UU}{\mathcal U}
\newcommand{\wt}{\widetilde}
\newcommand{\ima}{\hbox{Im}}
\newtheorem{theorem}{Theorem}[section]
\newtheorem{claim}[theorem]{Claim}
\newtheorem{corollary}[theorem]{Corollary}
\newtheorem{proposition}[theorem]{Proposition}
\newtheorem{nonumbering}{Theorem}
\newtheorem{nonumberingc}{Corollary}
\newtheorem{convention}{Conventions}
\theoremstyle{definition}
\newtheorem{remark}[theorem]{Remark}
\newtheorem{definition}[theorem]{Definition}
\newtheorem{nonumberingt}{Acknowledgements}
\begin{document}
\author[Robert Laterveer]
{Robert Laterveer}

\address{Institut de Recherche Math\'ematique Avanc\'ee,
CNRS -- Universit\'e 
de Strasbourg,\
7 Rue Ren\'e Des\-car\-tes, 67084 Strasbourg CEDEX,
FRANCE.}
\email{robert.laterveer@math.unistra.fr}

\title{A remark on the Chow ring of K\"uchle fourfolds of type $d3$}

\begin{abstract} We prove that K\"uchle fourfolds $X$ of type d3 have a multiplicative Chow--K\"unneth decomposition. We present some consequences for the Chow ring of $X$.
\end{abstract}

\keywords{Algebraic cycles, Chow ring, motives, Beauville ``splitting property''}

\subjclass{Primary 14C15, 14C25, 14C30.}

\maketitle

\section{Introduction}

K\"uchle \cite{Ku} has classified Fano fourfolds that are obtained as sections of globally generated homogeneous vector bundles on Grassmannians. In K\"uchle's list, a Fano fourfold 
{\em of type d3\/} is defined
as a smooth $4$--dimensional section of the vector bundle
  \[  (\wedge^2 \UU^\vee)^{\oplus 2}\oplus \OO_{G}(1)\ \to\ G:=\operatorname{Gr}(5,V_{10})\ ,\]
  where $\UU$ denotes the tautological bundle on the Grassmannian $G$ (of $5$--dimensional subspaces of a fixed $10$--dimensional complex vector space).

  Let $X$ be a K\"uchle fourfold of type d3.
  The Hodge diamond of $X$ is
    \[ \begin{array}[c]{ccccccccccccc}
      &&&&&& 1 &&&&&&\\
      &&&&&0&&0&&&&&\\
      &&&&0&&5&&0&&&&\\
        &&&0&&0&&0&&0&&&\\
      &&0&&1&&26&&1&&0&&\\
      &&&0&&0&&0&&0&&&\\
       &&&&0&&5&&0&&&&\\
        &&&&&0&&0&&&&&\\      
        &&&&&& 1 &&&&&&\\
\end{array}\]
   
 This Hodge diamond looks like the one of a $K3$ surface. Interestingly, the relation
   \[    \hbox{K\"uchle\ fourfolds\ of\ type\ d3}\ \leftrightsquigarrow    \ \hbox{$K3$\ surfaces} \]  
   goes further than a similarity of Hodge numbers: Kuznetsov \cite[Corollary 3.6]{Kuz} has shown that K\"uchle fourfolds $X$ of type d3 are related to $K3$ surfaces on the level of derived categories
  (in the sense that the derived category of $X$ admits a semi--orthogonal decomposition, and the interesting part of this decomposition is isomorphic to the derived category of a $K3$ surface).
 
 The main result of the present note is that K\"uchle fourfolds of type d3 behave like $K3$ surfaces from a Chow--theoretic point of view:
 
 \begin{nonumbering}[=theorem \ref{main}] Let $X$ be a K\"uchle fourfold of type d3. Then $X$ has a multiplicative Chow--K\"unneth decomposition (in the sense of \cite{SV}).
 \end{nonumbering}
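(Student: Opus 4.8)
The plan is to exhibit a self-dual Chow--K\"unneth decomposition $\{\pi_0,\dots,\pi_8\}$ of $X$ and to verify that it is multiplicative in the sense of \cite{SV}, i.e.\ that the three--fold orthogonality relation $\pi_k\circ\Delta^{sm}_X\circ(\pi_i\times\pi_j)=0$ holds for all $i+j\neq k$, where $\Delta^{sm}_X\in CH^{8}(X\times X\times X)_\QQ$ is the class of the small diagonal viewed as a correspondence $X\times X\vdash X$. Before doing anything I would exploit the shape of the Hodge diamond. Because $X$ is a Fano fourfold it is rationally connected, so $CH_0(X)_\QQ=\QQ$; the odd Betti numbers vanish, so the odd part of the motive is zero; and $h^{i,0}(X)=0$ for $i>0$, so by the Lefschetz $(1,1)$--theorem $H^2(X)$ and $H^6(X)$ are spanned by algebraic classes. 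Consequently one expects $\hh(X)_\QQ$ to be a sum of Tate motives together with a single transcendental summand $\ttt_X$ with $H^\ast(\ttt_X)=H^4_{tr}(X)$, a weight--four Hodge structure of $K3$ type.

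To make this precise and to locate $\ttt_X$, I would pass through Kuznetsov's result. His semi--orthogonal decomposition realizes a $K3$ category $\mathcal A_X\simeq D^b(S)$ inside $D^b(X)$, and I would upgrade this to an isomorphism of rational Chow motives
\[ \hh(X)_\QQ\ \cong\ \bigoplus_j \mathds{1}(-a_j)\ \oplus\ \ttt(S)(-1), \]
where $S$ is the associated $K3$ surface, $\ttt(S)$ its transcendental motive, and the Tate summands carry $H^0,H^2,H^6,H^8$ and the algebraic part of $H^4$; the weights force the twist $(-1)$, compatibly with $H^4_{tr}(X)\cong H^2_{tr}(S)(-1)$. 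Since a $K3$ surface carries an (unconditional) Chow--K\"unneth decomposition, transporting it through this isomorphism and adjoining the tautological projectors on the Tate summands produces the desired decomposition $\{\pi_i\}$, with $\ttt_X=\ttt(S)(-1)$.

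For multiplicativity the reductions of the first paragraph collapse the verification to essentially one relation. Writing $CH^i_{(s)}(X)=(\pi_{2i-s})_\ast CH^i(X)_\QQ$, one finds $CH^1=CH^1_{(0)}$, $CH^2=CH^2_{(0)}$ and $CH^4=CH^4_{(0)}=\QQ$, the only piece of positive level being the transcendental part $CH^3_{(2)}(X)\cong CH^2_{hom}(S)_\QQ$. Since $CH^4(X)_\QQ$ is one--dimensional and of level $0$, every intersection product ending in codimension $4$ automatically lies in the correct graded piece, and the same triviality annihilates every product having a factor in $CH^3_{(2)}$, such a product being a homologically trivial zero--cycle. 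The one genuine condition is thus that products of algebraic cycles landing in $CH^3(X)$ carry no transcendental component, i.e.\ $(\pi_4^{tr})_\ast(D\cdot E)=0$ for $D\in CH^1(X)$ and $E\in CH^2(X)$. I would deduce this from a Franchetta--type property for the universal family $f\colon\mathcal X\to B$ of type--$d3$ fourfolds: divisors, the Schubert classes restricted from $G=\operatorname{Gr}(5,V_{10})$, and their mutual products are all generically defined; a generically defined cohomology class gives a monodromy--invariant section of $R^4 f_\ast\QQ$, which by non--isotriviality of its transcendental part is algebraic; and the Franchetta property then lets one read off that the transcendental projection of any generically defined cycle, in particular of $D\cdot E$, vanishes in the Chow group.

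The principal obstacle lies in the middle two paragraphs. First, one must genuinely promote Kuznetsov's derived equivalence to an isomorphism of rational Chow motives carrying the Tate twist $(-1)$, and check that it is compatible with intersection products rather than merely additive. Second --- and this is where I expect the real work --- one must establish the Franchetta property for $f\colon\mathcal X\to B$ and verify that $CH^2(X)_\QQ$, together with the projectors $\pi_i$, is generated by generically defined cycles such as Schubert classes restricted from $G$ and products of divisors. Only with both in hand does the non--isotriviality of the transcendental variation yield the vanishing $(\pi_4^{tr})_\ast(D\cdot E)=0$ that closes the proof.
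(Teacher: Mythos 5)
There are two genuine gaps in your proposal, one in the construction of the decomposition and one in the verification of multiplicativity. First, the bridge from Kuznetsov to motives: you invoke the semi--orthogonal decomposition realizing $D^b(S)$ inside $D^b(X)$ and propose to ``upgrade'' it to an isomorphism of rational Chow motives. No such upgrade is supplied, and none is available off the shelf: passing from a derived--categorical $K3$ summand to a motivic decomposition is in general an open problem. The paper sidesteps this entirely by using a \emph{different} result from the same article of Kuznetsov (Corollary 3.5 rather than the derived--category statement): $X$ is \emph{biregularly} the blow--up of $(\PP^1)^4$ along a $K3$ surface $S$ cut out by two divisors of multidegree $(1,1,1,1)$. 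With that description, the motivic decomposition is just Manin's blow--up formula, and --- crucially --- the MCK property follows from Shen--Vial's blow--up criterion (proposition \ref{blowup}, i.e.\ \cite[Proposition 2.4]{SV2}), whose hypotheses (Chern classes of $\NNN_{S/(\PP^1)^4}$ in $A^\ast_{(0)}(S)$, graph of $S\hookrightarrow(\PP^1)^4$ of pure grade $0$) are exactly what the paper's Franchetta--type claim verifies.

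Second, and independently: your reduction of multiplicativity to the single relation $(\pi_4^{tr})_\ast(D\cdot E)=0$ is not valid. The MCK condition is the identity $\pi^k_X\circ\Delta^{sm}_X\circ(\pi^i_X\otimes\pi^j_X)=0$ of \emph{cycles} in $A^{8}(X\times X\times X)$; the statement that intersection products in $A^\ast(X)$ land in the expected bigraded pieces is only a \emph{consequence} of that identity and is strictly weaker --- your paragraph checks the latter, not the former. A telling symptom: your argument nowhere uses the Beauville--Voisin decomposition of the small diagonal of a $K3$ surface, yet under the motivic decomposition the grade--$0$ condition on $\Delta^{sm}_X$ has a component supported on the $h(S)(1)^{\otimes 3}$ summand which amounts precisely to that theorem; no amount of bookkeeping with $A^4(X)=\QQ$ and $A^2_{hom}(X)=0$ can substitute for it. Likewise, the Franchetta property you invoke for the universal family of d3 fourfolds over the Grassmannian parameter space is merely asserted, and is exactly where your ``real work'' would sit; the paper instead needs, and proves, a far more accessible statement --- a Franchetta--type claim for the universal $(1,1,1,1)\cap(1,1,1,1)$ surface in $(\PP^1)^4$, established by an elementary projective--bundle argument (the universal family $\bar\Ss\to(\PP^1)^4$ is a projective bundle) combined with Beauville--Voisin. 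Your strategy might conceivably be completed, but as written the two load--bearing steps are missing, and the paper's route through the blow--up description is both shorter and unconditional.
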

 
 This is very easily proven, provided one uses Kuznetsov's alternative description \cite{Kuz} of K\"uchle fourfolds of type d3.
 Theorem \ref{main} has interesting consequences for the Chow ring $A^\ast(X)_{\QQ}$:
  
 \begin{nonumberingc}[=corollary \ref{cor}] Let $X$ be a K\"uchle fourfold of type d3. 
 Let $R^3(X)\subset A^3(X)_{\QQ}$ be the subgroup generated by the Chern class $c_3(T_X)$ and intersections $A^1(X)\cdot A^2(X)$ of divisors with $2$--cycles.
 The cycle class map induces an injection
   \[ R^3(X)\ \hookrightarrow\ H^6(X,\QQ)\ .\]
  \end{nonumberingc}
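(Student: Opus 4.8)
The plan is to read the statement off the bigraded ring structure that the multiplicative Chow--Künneth (MCK) decomposition of Theorem \ref{main} puts on $A^\ast(X)_\QQ$. Writing $\{\pi_i\}$ for the projectors and $A^i_{(s)}(X):=(\pi_{2i-s})_\ast A^i(X)_\QQ$, the decomposition gives $A^i(X)_\QQ=\bigoplus_s A^i_{(s)}(X)$, and ``multiplicative'' means the intersection product respects the grading: $A^i_{(s)}(X)\cdot A^j_{(t)}(X)\subset A^{i+j}_{(s+t)}(X)$. Since $A^i_{(s)}(X)$ is annihilated by the cycle class map $\mathrm{cl}$ for $s\neq 0$, it suffices to prove that (a) every generator of $R^3(X)$ lies in the degree-zero piece $A^3_{(0)}(X)$, and (b) $A^3_{(0)}(X)$ meets $\ker\bigl(A^3(X)_\QQ\xrightarrow{\mathrm{cl}} H^6(X,\QQ)\bigr)$ only in $0$.

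For (a) I would first locate the homologically trivial cycles. Using Kuznetsov's description of $X$ via a $K3$ surface $S$, one upgrades the semi-orthogonal decomposition to an isomorphism of Chow motives exhibiting the non-Tate part of $\hh(X)$ as the transcendental motive $\hh^2_{tr}(S)(-1)$; this is precisely the ingredient behind Theorem \ref{main}. Consequently the homologically trivial cycles of $X$ are governed by $\hh^2_{tr}(S)$: the homologically trivial part satisfies $A^i_{\mathrm{hom}}(X)\cong A^{i-1}(\hh^2_{tr}(S))$, which vanishes for $i\le 2$ (divisors on a $K3$ surface are algebraic) and equals the group $T(S)$ of transcendental $0$-cycles for $i=3$. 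In particular $A^1(X)_\QQ=A^1_{(0)}(X)$ and $A^2(X)_\QQ=A^2_{(0)}(X)$ both inject into cohomology, while $A^3(X)_\QQ=A^3_{(0)}(X)\oplus A^3_{(2)}(X)$ with $A^3_{(2)}(X)=T(S)=A^3_{\mathrm{hom}}(X)$. Granting in addition the standard fact that for an MCK decomposition the Chern classes of $T_X$ lie in the degree-zero part, so that $c_3(T_X)\in A^3_{(0)}(X)$, multiplicativity of the grading yields $A^1(X)\cdot A^2(X)=A^1_{(0)}(X)\cdot A^2_{(0)}(X)\subset A^3_{(0)}(X)$, whence $R^3(X)\subset A^3_{(0)}(X)$.

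This already settles (b): the splitting $A^3(X)_\QQ=A^3_{(0)}(X)\oplus A^3_{(2)}(X)$ is direct and $A^3_{(2)}(X)=T(S)$ is exactly $\ker(\mathrm{cl})$, so $R^3(X)\subset A^3_{(0)}(X)$ forces $R^3(X)\cap\ker(\mathrm{cl})=0$, which is the asserted injection into $H^6(X,\QQ)\cong\QQ^5$. The main obstacle is the input feeding both vanishing statements above, namely promoting Kuznetsov's derived-categorical relation to an MCK-compatible identification of the transcendental part of $\hh(X)$ with $\hh^2_{tr}(S)(-1)$, so that the grading on $A^\ast(X)_\QQ$ is matched with the well understood Chow groups of $S$; once this is in hand (it is essentially the content of the proof of Theorem \ref{main}) the corollary is formal, the only remaining check being that $c_3(T_X)$ carries no transcendental ($A^3_{(2)}$-)component.
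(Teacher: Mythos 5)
Your proposal is correct and follows essentially the same route as the paper's proof of corollary \ref{cor} (specialized to $m=1$, $j=3$): the MCK bigrading from theorem \ref{main} places $A^1(X)\cdot A^2(X)$ and $c_3(T_X)$ in $A^3_{(0)}(X)$, and injectivity of the cycle class map on $A^3_{(0)}(X)$ is reduced, via a grade-$0$ motivic decomposition $h(X)\cong h(S)(1)\oplus(\hbox{Tate motives})$, to the known (Beauville--Voisin) structure of $A^2_{(0)}(S)$ for the $K3$ surface $S$. Two framing corrections only: the paper obtains that motivic decomposition from Manin's blow--up formula applied to Kuznetsov's \emph{geometric} description of $X$ as a blow--up of $(\PP^1)^4$ along $S$ (with grade-$0$ compatibility supplied by proposition \ref{blowup}), not by upgrading the semi-orthogonal decomposition to Chow motives (such derived-to-motivic upgrades are not available in general), and the membership $c_3(T_X)\in A^3_{(0)}(X)$ is not a ``standard fact'' formally implied by having an MCK decomposition but is precisely the ``moreover'' clause of theorem \ref{main}, which is where it must be cited.
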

 
  This is reminiscent of the famous result of Beauville--Voisin describing the Chow ring of a $K3$ surface \cite{BV}.
  More generally, there is a similar injectivity result for the Chow ring of certain self--products $X^m$ (corollary \ref{cor}).
  
 There are two other families of fourfolds on K\"uchle's list which have a Hodge diamond of $K3$ type: the families of type c5 and c7. There is also the complete intersection eightfold in $\operatorname{Gr}(2,V_8)$ of \cite[Proposition 5.2]{FM} which has a Hodge diamond of $K3$ type.
 It would be interesting to establish a multiplicative Chow--K\"unneth decomposition for those varieties as well; I hope to return to this in the future.

  \vskip0.6cm

\begin{convention} In this note, the word {\sl variety\/} will refer to a reduced irreducible scheme of finite type over $\C$. For a smooth variety $X$, we will denote by $A^j(X)$ the Chow group of codimension $j$ cycles on $X$ 
with $\QQ$--coefficients.

The notation 
$A^j_{hom}(X)$ will be used to indicate the subgroups of 
homologically trivial cycles.

For a morphism between smooth varieties $f\colon X\to Y$, we will write $\Gamma_f\in A^\ast(X\times Y)$ for the graph of $f$.
The contravariant category of Chow motives (i.e., pure motives with respect to rational equivalence as in \cite{Sc}, \cite{MNP}) will be denoted $\MM_{\rm rat}$. 

We will write $H^\ast(X):=H^\ast(X,\QQ)$ for singular cohomology with $\QQ$--coefficients.
\end{convention}
  
 \vskip0.6cm

  \section{Multiplicative Chow--K\"unneth decomposition}

	\begin{definition}[Murre \cite{Mur}]\label{ck} Let $X$ be a smooth projective
		variety of dimension $n$. We say that $X$ has a 
		{\em CK  decomposition\/} if there exists a decomposition of the
		diagonal
		\[ \Delta_X= \pi^0_X+ \pi^1_X+\cdots +\pi^{2n}_X\ \ \ \hbox{in}\
		A^n(X\times X)\ ,\]
		such that the $\pi^i_X$ are mutually orthogonal idempotents and
		$(\pi^i_X)_\ast H^\ast(X)= H^i(X)$.
		Given a CK decomposition for $X$, we set 
		$$A^i(X)_{(j)} := (\pi_X^{2i-j})_\ast A^i(X).$$
		The CK decomposition is said to be {\em self-dual\/} if
		\[ \pi^i_X = {}^t \pi^{2n-i}_X\ \ \ \hbox{in}\ A^n(X\times X)\ \ \ \forall
		i\ .\]
		(Here ${}^t \pi$ denotes the transpose of a cycle $\pi$.)
		
		  (NB: ``CK decomposition'' is short-hand for ``Chow--K\"unneth
		decomposition''.)
	\end{definition}
	
	\begin{remark} \label{R:Murre} The existence of a Chow--K\"unneth decomposition
		for any smooth projective variety is part of Murre's conjectures \cite{Mur},
		\cite{MNP}. 
		It is expected that for any $X$ with a CK
		decomposition, one has
		\begin{equation*}\label{hope} A^i(X)_{(j)}\stackrel{??}{=}0\ \ \ \hbox{for}\
		j<0\ ,\ \ \ A^i(X)_{(0)}\cap A^i_{num}(X)\stackrel{??}{=}0.
		\end{equation*}
		These are Murre's conjectures B and D, respectively.
	\end{remark}

	\begin{definition}[Definition 8.1 in \cite{SV}]\label{mck} Let $X$ be a
		smooth
		projective variety of dimension $n$. Let $\Delta_X^{sm}\in A^{2n}(X\times
		X\times X)$ be the class of the small diagonal
		\[ \Delta_X^{sm}:=\bigl\{ (x,x,x) : x\in X\bigr\}\ \subset\ X\times
		X\times X\ .\]
		A CK decomposition $\{\pi^i_X\}$ of $X$ is {\em multiplicative\/}
		if it satisfies
		\[ \pi^k_X\circ \Delta_X^{sm}\circ (\pi^i_X\otimes \pi^j_X)=0\ \ \ \hbox{in}\
		A^{2n}(X\times X\times X)\ \ \ \hbox{for\ all\ }i+j\not=k\ .\]
		In that case,
		\[ A^i(X)_{(j)}:= (\pi_X^{2i-j})_\ast A^i(X)\]
		defines a bigraded ring structure on the Chow ring\,; that is, the
		intersection product has the property that 
		\[  \ima \Bigl(A^i(X)_{(j)}\otimes A^{i^\prime}(X)_{(j^\prime)}
		\xrightarrow{\cdot} A^{i+i^\prime}(X)\Bigr)\ \subseteq\ 
		A^{i+i^\prime}(X)_{(j+j^\prime)}\ .\]
		
		(For brevity, we will write {\em MCK decomposition\/} for ``multiplicative Chow--K\"unneth decomposition''.)
	\end{definition}
	
	\begin{remark}
	The property of having an MCK decomposition is
	severely restrictive, and is closely related to Beauville's ``(weak) splitting
	property'' \cite{Beau3}. For more ample discussion, and examples of varieties
	admitting a MCK decomposition, we refer to
	\cite[Chapter 8]{SV}, as well as \cite{V6}, \cite{SV2},
	\cite{FTV}, \cite{LV}.
	\end{remark}

There are the following useful general results:	

\begin{proposition}[Shen--Vial \cite{SV}]\label{product} Let $M,N$ be smooth projective varieties that have an MCK decomposition. Then the product $M\times N$ has an MCK decomposition.
\end{proposition}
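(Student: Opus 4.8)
The plan is to construct the Chow--K\"unneth decomposition of $M\times N$ as the exterior product of the given decompositions of $M$ and $N$, and then to verify multiplicativity by factoring the defining vanishing, copy by copy, into the multiplicativities of $M$ and of $N$. First I would fix CK decompositions $\{\pi^i_M\}$ and $\{\pi^j_N\}$, write $d=\dim M$ and $e=\dim N$, and use the canonical reordering isomorphism
\[ (M\times N)\times(M\times N)\ \cong\ (M\times M)\times(N\times N) \]
to set
\[ \pi^k_{M\times N}\ :=\ \sum_{i+j=k}\pi^i_M\times\pi^j_N\ \in\ A^{d+e}\bigl((M\times N)\times(M\times N)\bigr)\ , \]
where $\times$ denotes the exterior product of cycles. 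That these sum to the diagonal is immediate, since $\sum_k\pi^k_{M\times N}=\bigl(\sum_i\pi^i_M\bigr)\times\bigl(\sum_j\pi^j_N\bigr)$ corresponds to $\Delta_M\times\Delta_N=\Delta_{M\times N}$.

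The next step is to check that the $\pi^k_{M\times N}$ are mutually orthogonal idempotents. The essential input is the standard compatibility of composition of correspondences with exterior products,
\[ (\pi^i_M\times \pi^j_N)\circ(\pi^{i'}_M\times \pi^{j'}_N)\ =\ (\pi^i_M\circ\pi^{i'}_M)\times(\pi^j_N\circ\pi^{j'}_N)\ , \]
valid after the appropriate reordering of factors. Since the $\pi^i_M$ (resp.\ $\pi^j_N$) are mutually orthogonal idempotents, the right-hand side vanishes unless $i=i'$ and $j=j'$, and equals $\pi^i_M\times\pi^j_N$ in that case; orthogonality and idempotence of the $\pi^k_{M\times N}$ follow at once. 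The cohomological normalization $(\pi^k_{M\times N})_\ast H^\ast(M\times N)=H^k(M\times N)$ is then a consequence of the K\"unneth formula, since $\pi^i_M\times\pi^j_N$ acts as the projector onto $H^i(M)\otimes H^j(N)$.

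For multiplicativity, the key geometric observation is that, under the reordering $(M\times N)^3\cong M^3\times N^3$, the small diagonal splits as an exterior product,
\[ \Delta_{M\times N}^{sm}\ =\ \Delta_M^{sm}\times\Delta_N^{sm}\ . \]
Expanding $\pi^k_{M\times N}$, $\pi^i_{M\times N}$, $\pi^j_{M\times N}$ as sums of exterior products and applying the composition--product compatibility above, every term of $\pi^k_{M\times N}\circ\Delta_{M\times N}^{sm}\circ(\pi^i_{M\times N}\otimes\pi^j_{M\times N})$ factors as
\[ \bigl(\pi^a_M\circ\Delta_M^{sm}\circ(\pi^{a'}_M\otimes\pi^{a''}_M)\bigr)\times\bigl(\pi^b_N\circ\Delta_N^{sm}\circ(\pi^{b'}_N\otimes\pi^{b''}_N)\bigr)\ , \]
with $a+b=k$, $a'+b'=i$, and $a''+b''=j$. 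By the MCK property of $M$ and of $N$, each of the two factors vanishes unless $a=a'+a''$ and $b=b'+b''$; but then $k=a+b=(a'+b')+(a''+b'')=i+j$. Hence the whole expression vanishes whenever $i+j\neq k$, which is exactly the multiplicativity condition.

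The only real obstacle is bookkeeping: one must keep careful track of the various permutations identifying $(M\times N)^2$, $(M\times N)^3$ and $(M\times N)^4$ with the corresponding products of copies of $M$ and $N$, and check that the composition operations (including the small-diagonal composition) are compatible with these identifications and with the exterior product, matching degrees correctly. There is no conceptual difficulty here—the entire argument is formal—but the verification of the product formula for composition of correspondences in precisely the form used above is where the care is required.
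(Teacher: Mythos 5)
Your proposal is correct and takes essentially the same approach as the paper: the paper's proof is simply a citation of \cite[Theorem 8.6]{SV}, which states that the product CK decomposition $\pi^i_{M\times N}=\sum_{k+\ell=i}\pi^k_M\times \pi^\ell_N$ --- exactly the projectors you construct --- is multiplicative. Your verification (factoring the small diagonal as $\Delta^{sm}_{M\times N}=\Delta^{sm}_M\times\Delta^{sm}_N$ under reordering and using compatibility of composition with exterior products to reduce to the MCK properties of $M$ and $N$ separately) is precisely the standard argument underlying that cited result.
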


\begin{proof} This is \cite[Theorem 8.6]{SV}, which shows more precisely that the {\em product CK decomposition\/}
  \[ \pi^i_{M\times N}:= \sum_{k+\ell=i} \pi^k_M\times \pi^\ell_N\ \ \ \in A^{\dim M+\dim N}\bigl((M\times N)\times (M\times N)\bigr) \]
  is multiplicative.
\end{proof}

\begin{proposition}[Shen--Vial \cite{SV2}]\label{blowup} Let $M$ be a smooth projective variety, and let $f\colon\wt{M}\to M$ be the blow--up with center a smooth closed subvariety
$N\subset M$. Assume that
\begin{enumerate}

\item $M$ and $N$ have a self--dual MCK decomposition;

\item the Chern classes of the normal bundle $\NNN_{N/M}$ are in $A^\ast_{(0)}(N)$;

\item the graph of the inclusion morphism $N\to M$ is in $A^\ast_{(0)}(N\times M)$;

\item the Chern classes $c_j(T_M)$ are in $A^\ast_{(0)}(M)$.

\end{enumerate}
Then $\wt{M}$ has a self--dual MCK decomposition, the Chern classes $c_j(T_{\wt{M}})$ are in $A^\ast_{(0)}(\wt{M})$, and the graph $\Gamma_f$ is in $A^\ast_{(0)}(  \wt{M}\times M)$.
\end{proposition}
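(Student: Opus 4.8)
The plan is to build the CK decomposition of $\wt M$ directly out of those of $M$ and $N$ via the classical blow--up formula for Chow motives. Write $E=\PP(\NNN_{N/M})$ for the exceptional divisor, $g\colon E\to N$ for the projection, $j_E\colon E\hookrightarrow\wt M$ and $\iota\colon N\hookrightarrow M$ for the inclusions, $\xi=c_1(\OO_E(1))$ for the relative hyperplane class, and $c=\codim(N,M)$. Manin's blow--up formula then gives an isomorphism of motives
\[ \hh(\wt M)\ \cong\ \hh(M)\ \oplus\ \bigoplus_{i=1}^{c-1}\hh(N)(-i)\ ,\]
with explicit correspondences assembled from $\Gamma_f$, $\Gamma_{j_E}$, $\Gamma_g$ and $\xi$. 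First I would transport the self--dual CK decompositions of $M$ and $N$ through this isomorphism, declaring $\pi^k_{\wt M}$ to act as $\pi^k_M$ on the summand $\hh(M)$ and as $\pi^{k-2i}_N$ on the Tate--twisted summand $\hh(N)(-i)$. That these are mutually orthogonal idempotents projecting onto $H^k(\wt M)$ is immediate from the corresponding statements for $M$ and $N$ together with the blow--up formula in cohomology; self--duality follows from hypothesis (1) and the compatibility of the structural correspondences with transposition, where assumption (3) guarantees that the contribution of $\iota$ is degree--preserving. This produces the bigrading $A^i(\wt M)_{(j)}$, with the convention that a Tate twist shifts codimension but not the lower grading, so that $A^p(N)_{(j)}$ feeds into $A^{p+i}(\wt M)_{(j)}$.

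The heart of the matter is multiplicativity, i.e.\ compatibility of the small diagonal $\Delta_{\wt M}^{sm}$ with this bigrading. Equivalently I would check that the intersection product on $A^\ast(\wt M)$, written in the blow--up coordinates
\[ A^k(\wt M)\ =\ f^\ast A^k(M)\ \oplus\ \bigoplus_{i=0}^{c-2} j_{E\ast}\bigl(\xi^i\cdot g^\ast A^{k-1-i}(N)\bigr)\ ,\]
respects the grading on each of the three types of products. For $f^\ast\alpha\cdot f^\ast\alpha'=f^\ast(\alpha\alpha')$ this is the MCK property of $M$. For a mixed product $f^\ast\alpha\cdot j_{E\ast}(\xi^i g^\ast\beta)$ the projection formula together with $f\circ j_E=\iota\circ g$ reduces it to $j_{E\ast}\bigl(\xi^i g^\ast(\iota^\ast\alpha\cdot\beta)\bigr)$, which lands in the correct piece because $\iota^\ast$ is graded of degree $0$ by (3) and $N$ has an MCK decomposition. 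For a product $j_{E\ast}(\xi^i g^\ast\beta)\cdot j_{E\ast}(\xi^{i'}g^\ast\beta')$ of two $E$--classes, the self--intersection formula for the exceptional divisor turns it into $\pm\,j_{E\ast}\bigl(\xi^{i+i'+1}g^\ast(\beta\beta')\bigr)$; whenever the exponent of $\xi$ reaches $c$ one reduces it through the projective--bundle relation on $E=\PP(\NNN_{N/M})$, which re--expresses $\xi^c$ in lower powers of $\xi$ with coefficients $g^\ast c_k(\NNN_{N/M})$. It is exactly here that assumption (2) is indispensable: the emerging Chern classes $c_k(\NNN_{N/M})$ lie in $A^\ast_{(0)}(N)$, so multiplying by them preserves the lower grading, while MCK of $N$ handles $\beta\beta'$.

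The main obstacle is organising this last computation at the level of correspondences rather than of Chow groups: strictly speaking MCK demands $\pi^k_{\wt M}\circ\Delta_{\wt M}^{sm}\circ(\pi^i_{\wt M}\otimes\pi^j_{\wt M})=0$ for $i+j\neq k$, so I would decompose $\Delta_{\wt M}^{sm}$ as a sum of correspondences pulled back from the small diagonals of $M$ and of $N$ and from the graphs of $f,g,j_E,\iota$, and then run the three computations above simultaneously. The bookkeeping of the $\xi$--powers and their reduction is the genuinely delicate part, but it is forced to respect the grading precisely by hypotheses (2) and (3). Self--duality of the resulting decomposition has already been arranged in the first step, and the two auxiliary conclusions are then read off directly: $\Gamma_f\in A^\ast_{(0)}(\wt M\times M)$ holds by construction, since $f^\ast$ is one of the structural inclusions of the decomposition, while $c_j(T_{\wt M})\in A^\ast_{(0)}(\wt M)$ follows from the standard formula expressing $T_{\wt M}$ in terms of $f^\ast T_M$, the normal bundle $\NNN_{N/M}$ and $\OO_E(1)$, combined with assumptions (2) and (4) and the gradedness of $f^\ast$, $g^\ast$ and $j_{E\ast}$ established above.
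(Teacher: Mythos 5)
The first thing to note is that the paper does not prove this proposition at all: its entire proof is the one-line citation ``This is \cite[Proposition 2.4]{SV2}''. So your text is not an alternative to an in-paper argument but a blind reconstruction of Shen--Vial's proof, and judged on those terms the outline is faithful to what they actually do: the motivic blow--up decomposition $h(\wt M)\cong h(M)\oplus\bigoplus_{i=1}^{c-1}h(N)(-i)$, the CK projectors transported through it, the three-case analysis of products in blow--up coordinates using the projection formula and $f\circ j_E=\iota\circ g$, the excess-intersection identity $j_E^\ast j_{E\ast}(\gamma)=-\xi\cdot\gamma$, and the Grothendieck relation on $\PP(\NNN_{N/M})$, which is exactly where hypothesis (2) enters. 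You also place hypotheses (3) and (4) where they are genuinely used (gradedness of $\iota^\ast$ and $\iota_\ast$, respectively the tangent-bundle computation for $c_j(T_{\wt M})$).

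That said, as a standalone proof your write-up has a gap at precisely the decisive point, and you half-admit it. Verifying that the intersection product on $A^\ast(\wt M)$ respects the bigrading is strictly weaker than the MCK identity $\pi^k_{\wt M}\circ\Delta^{sm}_{\wt M}\circ(\pi^i_{\wt M}\otimes\pi^j_{\wt M})=0$, which is a cycle-theoretic vanishing in $A^{2n}(\wt M\times\wt M\times\wt M)$ and not merely a statement about the induced action on Chow groups; so your opening ``Equivalently'' is false, as you yourself observe two sentences later. The repair you then gesture at --- decomposing $\Delta^{sm}_{\wt M}$ into pieces built from the small diagonals of $M$ and $N$ and the graphs of $f$, $g$, $j_E$, $\iota$, and ``running the three computations simultaneously'' --- is not a detail but the entire content of Shen--Vial's proof, which occupies several pages and passes through an analogous correspondence-level statement for projectivized bundles $\PP(\NNN_{N/M})\to N$ with Chern classes in $A^\ast_{(0)}(N)$; asserting that the bookkeeping ``is forced to respect the grading'' by (2) and (3) is a statement of the goal, not a proof of it. One smaller misattribution: self-duality of the transported decomposition follows from self-duality of $\{\pi^i_M\}$ and $\{\pi^i_N\}$ together with the behaviour of the structural correspondences under transposition; hypothesis (3) plays no role there, its job is in the multiplicativity and grading statements. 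In short: as a plan your proposal is the right one and matches the cited source, but at the step where the real work happens it substitutes an acknowledgement of difficulty for an argument, so it does not yet constitute a proof.
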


\begin{proof} This is \cite[Proposition 2.4]{SV2}.
\end{proof}

 \section{Main result}
 
 \begin{theorem}\label{main} Let $X$ be a K\"uchle fourfold of type d3. Then $X$ has a self--dual MCK decomposition. Moreover, the Chern classes $c_j(T_X)$ are in 
 $A^\ast_{(0)}(X)$.
 \end{theorem}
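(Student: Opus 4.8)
The plan is to deduce the theorem from the blow-up formula of Proposition \ref{blowup}, using Kuznetsov's geometric description of $X$. According to \cite[Corollary 3.6]{Kuz}, a K\"uchle fourfold of type d3 is isomorphic to the blow-up $\wt{M}\to M$ of a smooth projective fourfold $M$ along a smooth K3 surface $N\subset M$, where $M$ has a full exceptional collection and hence trivial Chow groups (its Chow motive is a finite direct sum of Lefschetz motives); the appearance of $D^b(N)$ in the semi-orthogonal decomposition of $D^b(X)$ is exactly the K3 factor detected by Kuznetsov. Granting this, it suffices to verify hypotheses (1)--(4) of Proposition \ref{blowup} for the pair $(M,N)$, since the proposition then yields a self-dual MCK decomposition on $X\cong\wt M$ together with $c_j(T_X)\in A^\ast_{(0)}(X)$.

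First I would dispose of the automatic inputs. Since $M$ has trivial Chow groups, $A^\ast(M)=H^\ast(M)$ is spanned by algebraic classes and carries the tautological self-dual MCK decomposition $\pi^{2i}_M=\sum_\alpha e^{(i)}_\alpha\times (e^{(i)}_\alpha)^\vee$ built from dual bases of $A^\ast(M)$; for such $M$ one has $A^\ast(M)=A^\ast_{(0)}(M)$, so in particular $c_j(T_M)\in A^\ast_{(0)}(M)$. This settles hypothesis (4) and the $M$-part of (1). For the K3 surface $N$, the Beauville--Voisin decomposition \cite{BV} provides a self-dual MCK decomposition with $A^0_{(0)}(N)=A^0(N)$, $A^1_{(0)}(N)=A^1(N)$, $A^2_{(0)}(N)=\QQ\, o_N$ and $A^2_{(2)}(N)=A^2_{hom}(N)$, where $o_N\in A^2(N)$ is the Beauville--Voisin class; this settles the $N$-part of (1).

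The substance of the argument is hypotheses (2) and (3), both of which I would reduce to the single claim
\[ \iota^\ast\bigl(A^2(M)\bigr)\ \subseteq\ \QQ\, o_N\ =\ A^2_{(0)}(N)\ ,\]
where $\iota\colon N\hookrightarrow M$ is the inclusion. For (2), the normal bundle sequence gives $c(\NNN_{N/M})=\iota^\ast c(T_M)\cdot c(T_N)^{-1}$; since $N$ is a K3 surface one has $c_1(T_N)=0$ and $c_2(T_N)=24\,o_N$, whence $c_1(\NNN_{N/M})=\iota^\ast c_1(T_M)\in A^1(N)=A^1_{(0)}(N)$ and $c_2(\NNN_{N/M})=\iota^\ast c_2(T_M)-24\,o_N$, which lies in $A^2_{(0)}(N)$ precisely when the displayed claim holds. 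For (3), because the motive of $M$ is a sum of Lefschetz motives one has $A^\ast(N\times M)=A^\ast(N)\otimes A^\ast(M)$ and the product MCK grading $A^\ast_{(0)}(N\times M)=\bigoplus_b A^\ast_{(0)}(N)\otimes A^b(M)$; writing $\Gamma_\iota=\sum_q \iota^\ast(g_q)\times g_q^\vee$ for dual bases $\{g_q\}$, $\{g_q^\vee\}$ of $A^\ast(M)$, one sees that $\Gamma_\iota\in A^\ast_{(0)}(N\times M)$ is equivalent to $\iota^\ast(g_q)\in A^\ast_{(0)}(N)$ for all $q$. As $A^j(N)=A^j_{(0)}(N)$ for $j\le 1$ and $A^j(N)=0$ for $j\ge 3$, only the classes $g_q\in A^2(M)$ impose a condition, and it is once more the displayed claim.

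Thus the entire argument funnels into the displayed claim, which is the step I expect to require the most care, since it is where the specific geometry of $M$ enters. When the Chow ring $A^\ast(M)$ is generated in degree one, every class in $A^2(M)$ is a sum of products of divisors, and then $\iota^\ast(d_1\cdot d_2)=\iota^\ast d_1\cdot\iota^\ast d_2\in A^1(N)\cdot A^1(N)=\QQ\,o_N$ by the Beauville--Voisin theorem \cite{BV}; this proves the claim and makes the whole deduction essentially immediate. Should $A^2(M)$ contain indecomposable classes, one would instead verify directly that their restrictions to $N$ are proportional to $o_N$, which is the only genuinely variety-specific computation one might have to face. With (1)--(4) in hand, Proposition \ref{blowup} applies and delivers both the self-dual MCK decomposition on $X$ and the assertion $c_j(T_X)\in A^\ast_{(0)}(X)$, completing the proof.
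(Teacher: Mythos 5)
Your proposal is correct, and it follows the paper's skeleton --- Kuznetsov's blow-up description combined with Proposition \ref{blowup} applied to $M=(\PP^1)^4$ and $N=S$, with hypotheses (1) and (4) disposed of by the same standard facts --- but you verify the two substantive hypotheses (2) and (3) by a genuinely different and more direct route. The paper proceeds family-wise: it considers the universal $(1,1,1,1)^{\oplus 2}$ complete intersection $\Ss\to B$ and proves a Franchetta-type statement (Claim \ref{gfc}: a cycle on $\Ss$ whose restriction to each fibre is homologically trivial restricts to zero in each $A^\ast(S_b)$), exploiting the projective-bundle structure of $\bar{\Ss}\to(\PP^1)^4$; hypotheses (2) and (3) then follow because the cycles $(\pi^2_{S_b})_\ast c_2(\NNN_{S_b/(\PP^1)^4})$ and $(\pi^\ell_{S_b\times(\PP^1)^4})_\ast(\Gamma_{\iota_b})$ are restrictions of universal cycles that are fibrewise homologically trivial. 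You instead bypass the family altogether, reducing both hypotheses to the single statement $\iota^\ast A^2(M)\subseteq\QQ\, o_N$ via the explicit identities $c(\NNN_{N/M})=\iota^\ast c(T_M)\cdot c(T_N)^{-1}$ (with $c_1(T_N)=0$, $c_2(T_N)=24\,o_N$) and $\Gamma_\iota=(\iota\times\ide_M)^\ast\Delta_M=\sum_q\iota^\ast(g_q)\times g_q^\vee$, together with the correct computation $A^\ast_{(0)}(N\times M)=\bigoplus_b A^\ast_{(0)}(N)\otimes A^b(M)$ for $M$ with trivial Chow groups; all of these steps check out. Both arguments ultimately funnel into the same final input --- $A^2((\PP^1)^4)$ is spanned by products of divisors, so Beauville--Voisin forces restrictions to lie in $\QQ\, o_S$ --- but the paper's detour buys a stronger intermediate result of independent interest (the Franchetta-type claim, in the spirit of \cite{PSY}), while your route is more elementary and avoids the formalism of relative correspondences entirely. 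Two small blemishes to fix: the blow-up description is \cite[Corollary 3.5]{Kuz}, not Corollary 3.6 (the latter is the derived-category statement); and you leave the identification of $M$ conditional, hedging with a fallback in case $A^2(M)$ contains indecomposable classes --- since Kuznetsov's result gives precisely $M=(\PP^1)^4$, whose Chow ring is generated in degree one, your first branch applies and the hedge can simply be deleted.
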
 
 
 \begin{proof} The argument relies on the following alternative description of $X$ (this is \cite[Corollary 3.5]{Kuz}):
 
 \begin{theorem}[Kuznetsov \cite{Kuz}]\label{alter} Let $X$ be a K\"uchle fourfold of type d3. Then $X$ is isomorphic to the blow--up of $(\PP^1)^4$ in a $K3$ surface $S$ obtained as smooth intersection of two divisors of multidegree $(1,1,1,1)$.
 \end{theorem}
 
 (The argument of  \cite[Corollary 3.5]{Kuz} also shows that conversely, any blow--up of  $(\PP^1)^4$ in a $K3$ surface $S$ of this type is a K\"uchle fourfold of type d3. We will not need this.)
  
 We wish to apply the general result proposition \ref{blowup} to $M=(\PP^1)^4$ and $N=S$. All we need to do is to check that the assumptions of proposition \ref{blowup} are met with.
 Assumption (1) is OK, since varieties with trivial Chow groups and $K3$ surfaces have a self--dual MCK decomposition \cite[Example 8.17]{SV}. Assumption (4) is trivially satisfied, since $A^\ast_{hom}((\PP^1)^4)=0$ and so $A^\ast_{}((\PP^1)^4)=A^\ast_{(0)}((\PP^1)^4)$.
 
 To check assumptions (2) and (3), we consider things family--wise. That is, we write
   \[ \bar{B}:= \PP H^0\bigl( (\PP^1)^4, \OO(1,1,1,1)^{\oplus 2}\bigr) \]
   and we consider the universal complete intersection
   \[\bar{\Ss}\ \to\ \bar{B}\ .\] 
   We write $B\subset\bar{B}$ for the Zariski open parametrizing smooth dimensionally transversal intersection, and $\Ss\to B$ for the base change (so the fibres $S_b$ of $\Ss\to B$ are exactly the $K3$ surfaces considered in theorem \ref{alter}).
   We make the following claim:
   
   \begin{claim}\label{gfc} Let $\Gamma\in A^i(\Ss)$ be such that 
     \[ \Gamma\vert_{S_b}=0\ \ \ \hbox{in}\ H^{2i}(S_b)\ \ \ \forall b\in B\ .\]
     Then also
      \[ \Gamma\vert_{S_b}=0\ \ \ \hbox{in}\ A^i(S_b)\ \ \ \forall b\in B\ .\]
     \end{claim}
     
     We argue that the claim implies that assumptions (2) and (3) are met with (and so proposition \ref{blowup} can be applied to prove theorem \ref{main}).
     Indeed, let $p_j\colon \Ss\times_B \Ss\to \Ss$, $j=1,2$, denote the two projections.
     We observe that
     \[ \pi^0_\Ss:={1\over 24} (p_1)^\ast  c_2(T_{\Ss/B})\ ,\ \ \pi^4_\Ss:={1\over 24} (p_2)^\ast  c_2(T_{\Ss/B})\ ,\ \ \pi^2_\Ss:= \Delta_\Ss-\pi^0_\Ss-\pi^4_\Ss\ \ \ \in A^4(\Ss\times_B \Ss) \]
     defines a ``relative MCK decomposition'', in the sense that for any $b\in B$, the restriction $\pi^i_\Ss\vert_{S_b\times S_b}$ defines an MCK decomposition (which we denote
     $\pi^i_{S_b}$) for $S_b$.
     
     To check that assumption (2) is satisfied, we need to check that for any $b\in B$ there is vanishing
     \begin{equation}\label{need} (\pi^2_{S_b})_\ast c_2(\NNN_{S_b/(\PP^1)^4})\stackrel{??}{=}0\ \ \ \hbox{in}\ A^2(S_b)\ .\end{equation}
     But we can write
      \[  (\pi^2_{S_b})_\ast c_2(\NNN_{S_b/(\PP^1)^4})  =  \Bigl(  (\pi^2_\Ss)_\ast c_2(\NNN_{\Ss/((\PP^1)^4\times B)})\Bigr)\vert_{S_b}    \ \ \ \hbox{in}\ A^2(S_b)\ \]
      (for the formalism of relative correspondences, cf. \cite[Chapter 8]{MNP}),
      and besides we know that $  (\pi^2_{S_b})_\ast c_2(\NNN_{S_b/(\PP^1)^4})$ is homologically trivial ($\pi^2_{S_b}$ acts as zero on $H^4(S_b)$). Thus, the claim implies the necessary vanishing (\ref{need}). 
      
     Assumption (3) is checked similarly. Let $\iota_b\colon S_b\to (\PP^1)^4$ and $\iota\colon \Ss\to (\PP^1)^4\times B$ denote the inclusion morphisms.
     To check assumption (3), we need to convince ourselves of the vanishing    
      \begin{equation}\label{need2} (\pi^\ell_{S_b\times(\PP^1)^4})_\ast (\Gamma_{\iota_b})\stackrel{??}{=}0\ \ \ \hbox{in}\ A^4(S_b\times (\PP^1)^4)\ \ \ \forall \ell\not= 8\ ,\ \forall b\in B\ .\end{equation}
      
      The fact that $\ell\not=8$ implies that $ (\pi^\ell_{S_b\times(\PP^1)^4})_\ast (\Gamma_{\iota_b})$ is homologically trivial. Furthermore, we can write the cycle we are interested in as the restriction of a universal cycle:
      \[  (\pi^\ell_{S_b\times(\PP^1)^4})_\ast (\Gamma_{\iota_b}) =   \Bigl( (\sum_{j+k=\ell} \pi^j_\Ss\times \pi^k_{(\PP^1)^4}) _\ast (\Gamma_\iota)\Bigr)\vert_{S_b\times (\PP^1)^4}
      \ \ \ \hbox{in}\ A^4(S_b\times (\PP^1)^4)\ .\]
      For any $b\in B$, there is a commutative diagram
      \[  \begin{array}[c]{ccc}
              A^4(\Ss\times (\PP^1)^4) &\to&    A^4(S_b\times (\PP^1)^4)      \\
              &&\\
            \ \ \   \downarrow{\cong}&&\ \ \  \downarrow{\cong}\\
            &&\\
             \bigoplus A^\ast(\Ss) &\to&  \bigoplus A^\ast(S_b)\\
             \end{array} \]
             where horizontal arrows are restriction to a fibre, and
             where vertical arrows are isomorphisms by repeated application of the projective bundle formula for Chow groups.
            Claim \ref{gfc} applied to the lower horizontal arrow shows the vanishing (\ref{need2}), and so assumption (3) holds.
            
            It is left to prove the claim. Since $A^i_{hom}(S_b)=0$ for $i\le 1$, the only non--trivial case is $i=2$. 
            Given $\Gamma\in A^2(\Ss)$ as in the claim, let $\bar{\Gamma}\in A^2(\bar{\Ss})$ be a cycle restricting to $\Gamma$.
            We consider the two projections
             \[ \begin{array}[c]{ccc}
       \bar{\Ss}&\xrightarrow{\pi}& (\PP^1)^4  \\
       \ \ \ \ \downarrow{\scriptstyle \phi}&&\\
         \ \  \bar{B}\ &&\\
          \end{array}\]  
      Since any point of $(\PP^1)^4$ imposes exactly one condition on $\bar{B}$, the morphism $\pi$ has the structure of a projective bundle. As such, any 
      $\bar{\Gamma}\in A^{2}(\bar{\Ss})$ can be written
          \[ \bar{\Gamma}=    \sum_{\ell=0}^2 \pi^\ast( a_\ell)  \cdot \xi^\ell  \ \ \ \hbox{in}\ A^{2}(\bar{\Ss})\ ,\]
                where $a_\ell\in A^{2-\ell}( (\PP^1)^4)$ and $\xi\in A^1(\bar{\Ss})$ is the relative hyperplane class.
                  
        Let $h:=c_1(\OO_{\bar{B}}(1))\in A^1(\bar{B})$. There is a relation
        \[  \phi^\ast(h)=\alpha \xi +  \pi^\ast(h_1)\ \ \ \hbox{in}\ A^1(\bar{\Ss})\ ,\]
        where $\alpha\in\QQ$ and $h_1\in A^1((\PP^1)^4)$. As in \cite[Proof of Lemma 1.1]{PSY}, one checks that $\alpha\not=0$ (if $\alpha$ were $0$, we would have
        $\phi^\ast(h^{\dim \bar{B}})=\pi^\ast(h_1^{\dim \bar{B}})$, which is absurd since $\dim \bar{B}>4$ and so the right--hand side is $0$).
        Hence, there is a relation
        \[ \xi = {1\over \alpha} \bigl(\phi^\ast(h)-\pi^\ast(h_1)\bigr)\ \ \ \hbox{in}\ A^1(\bar{\Ss})\ .\]        
      For any $b\in B$, the restriction of $\phi^\ast(h)$ to the fibre $S_b$ vanishes, and so it follows that
      \[ \bar{\Gamma}\vert_{S_b} = a_0^\prime\vert_{S_b}\ \ \ \hbox{in}\ A^{2}(S_b)\ \]
     for some $a_0^\prime\in A^2( (\PP^1)^4)$. But
      \[ A^2( (\PP^1)^4)= \bigoplus_{i+j+k+\ell=2} A^i(\PP^1)\otimes A^j(\PP^1)\otimes A^k(\PP^1)\otimes A^\ell(\PP^1) \]
      is generated by intersections of divisors, and so Beauville--Voisin's result \cite{BV} implies that
      \[ \bar{\Gamma}\vert_{S_b} = a_0^\prime\vert_{S_b} \ \ \in\ \QQ {\mathfrak{o}}_{S_b}\ \ \ \subset\ A^{2}(S_b)\ ,\]
      where ${\mathfrak{o}}_{S_b}\in A^2(S_b)$ is the distinguished $0$--cycle of \cite{BV}. This proves the claim.
 \end{proof}

 \section{A consequence} 
  
 \begin{corollary}\label{cor} Let $X$ be a K\"uchle fourfold of type d3, and let $m\in\NN$. Let $R^\ast(X^m)\subset A^\ast(X^m)$ be the $\QQ$--subalgebra
   \[ R^\ast(X^m):=  < (p_i)^\ast A^1(X), (p_i)^\ast A^2(X), (p_{ij})^\ast(\Delta_X), (p_i)^\ast c_3(T_X)>\ \ \ \subset\ A^\ast(X^m)\ .\]
   (Here $p_i\colon X^m\to X$ and $p_{ij}\colon X^m\to X^2$ denote projection to the $i$th factor, resp. to the $i$th and $j$th factor.)
   
  The cycle class map induces injections
   \[ R^j(X^m)\ \hookrightarrow\ H^{2j}(X^m)\]
   in the following cases:
   
   \begin{enumerate}
   
   \item $m=1$ and $j$ arbitrary;
   
   \item $m=2$ and $j\ge 5$;
   
   \item $m=3$ and $j\ge 9$.
   \end{enumerate}
       \end{corollary}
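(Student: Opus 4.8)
The plan is to reduce the statement to two things: that $R^\ast(X^m)$ lands in the degree-zero part $A^\ast_{(0)}(X^m)$ of the bigrading coming from the MCK decomposition, and that in the stated codimensions $A^\ast_{(0)}(X^m)$ injects into cohomology. The first point is formal. By theorem \ref{main} and proposition \ref{product}, $X^m$ carries a self-dual MCK decomposition, so $A^\ast_{(0)}(X^m)$ is a subring of $A^\ast(X^m)$, and it suffices to see that each generator of $R^\ast(X^m)$ sits in degree zero. Now $A^1(X)=A^1_{(0)}(X)$ and $A^2(X)=A^2_{(0)}(X)$, simply because $A^1_{hom}(X)=A^2_{hom}(X)=0$ (by theorem \ref{alter} and the blow-up formula, using $A^{\le 1}_{hom}(S)=0$ for the K3 surface $S$) while $A^i_{(s)}(X)\subseteq A^i_{hom}(X)$ for $s>0$; the class $c_3(T_X)$ lies in $A^3_{(0)}(X)$ by theorem \ref{main}; and $\Delta_X=\sum_i\pi^i_X$ lies in $A^4_{(0)}(X\times X)$, since the self-duality ${}^t\pi^i_X=\pi^{8-i}_X$ together with orthogonality of the projectors forces each $\pi^{2i}_X$ into $A^4_{(0)}(X\times X)$. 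As the coordinate projections $p_i,p_{ij}$ are compatible with the product MCK, pulling back preserves the grading, and multiplicativity gives $R^\ast(X^m)\subseteq A^\ast_{(0)}(X^m)$.

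The substance lies in the second point, and here I would pass to motives. Theorem \ref{alter} together with the blow-up formula furnishes a refined Chow--K\"unneth decomposition
\[ \hh(X)\ \cong\ \bigoplus_i \mathds{1}(-c_i)\ \oplus\ \ttt(S)(-1)\ ,\]
a sum of Lefschetz motives (arising from $(\PP^1)^4$ and from the algebraic part of $\hh(S)$) and a single copy of the transcendental motive $\ttt(S)(-1)$; this decomposition is compatible with the grading produced in the proof of theorem \ref{main}. The decisive feature is that the transcendental piece is confined to one spot: since $A^1(\ttt(S))=0$ and $A^2(\ttt(S))=A^2_{hom}(S)=A^2_{(2)}(S)$, the motive $\ttt(S)(-1)$ has \emph{vanishing} degree-zero Chow groups, its only nonzero Chow group being $A^3_{(2)}(X)\cong A^2_{hom}(S)$.

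Next I would expand $\hh(X^m)\cong\hh(X)^{\otimes m}$: each summand has the shape $\ttt(S)^{\otimes r}(-c)$, tensored with Lefschetz motives, for some $0\le r\le m$. Its degree-zero Chow group is governed by $A^\bullet_{(0)}(\ttt(S)^{\otimes r})$, which, $\ttt(S)^{\otimes r}$ being pure of weight $2r$, is concentrated in codimension $r$. For $r=0$ the summand is of Lefschetz type and its degree-zero Chow group maps isomorphically onto its (algebraic) cohomology. For $r=1$ the degree-zero part vanishes, because $A^1_{(0)}(\ttt(S))=0$. For $r\ge 2$, bookkeeping of the Tate twists places the degree-zero part in codimension $2r+\sum a_i$, where the $a_i\in\{0,\dots,4\}$ are the twists of the $m-r$ Lefschetz factors; the maximal such codimension is $4m-2r\le 4m-4$. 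Hence, for $j\ge 4m-3$ the group $A^j_{(0)}(X^m)$ receives contributions only from Lefschetz summands, and since the cycle class map respects the motivic decomposition it restricts to a direct sum of injections of algebraic Tate classes, giving $A^j_{(0)}(X^m)\hookrightarrow H^{2j}(X^m)$. Combined with $R^\ast(X^m)\subseteq A^\ast_{(0)}(X^m)$, this proves all three cases: $m=1$ forces $r\le 1$, so every $j$ is allowed, while $m=2$ and $m=3$ yield the thresholds $4m-3=5$ and $4m-3=9$. (By Poincar\'e duality the complementary low range $j\le 3$ would also work, but it is not needed.)

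The main obstacle is precisely what this degree count lets me sidestep. For $j$ in the excluded middle band $[4,4m-4]$, the degree-zero part genuinely involves $A^r_{(0)}(\ttt(S)^{\otimes r})$ with $r\ge 2$, and the injectivity of $A^r_{(0)}(\ttt(S)^{\otimes r})\to H^{2r}$ for tensor powers of the transcendental K3 motive is unknown --- it is a case of the Bloch--Beilinson--Murre conjectures for cycles on $S^r$. The content of the corollary is that in the stated codimensions the transcendental tensor powers never reach the degree-zero part, so no input beyond theorem \ref{main} and the structure of $\hh(X)$ is required; the hardest genuine step is therefore the verification that the refined decomposition of $\hh(X)$ is compatible with the MCK grading, everything else being degree bookkeeping.
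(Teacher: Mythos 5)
Your proof is correct, and its two-step skeleton is the same as the paper's: first $R^\ast(X^m)\subseteq A^\ast_{(0)}(X^m)$ (your argument here, including the self-duality/orthogonality computation showing $\Delta_X\in A^4_{(0)}(X\times X)$, is exactly what the paper invokes via \cite[Lemma 1.4]{SV2}), then injectivity of $A^j_{(0)}(X^m)\to H^{2j}(X^m)$ in the stated range. Where you genuinely diverge is in the second step. The paper stops the decomposition at Manin's blow-up formula $h(X)\cong h(S)(1)\oplus(\hbox{Lefschetz part})$, observes via proposition \ref{blowup} that this isomorphism has pure grade $0$, and then cites as a black box the statement that $A^i_{(0)}(S^m)\hookrightarrow H^{2i}(S^m)$ for $i\ge 2m-1$, valid for any surface with an MCK decomposition (\cite{V6}, \cite{acs}). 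You instead refine the decomposition one step further, splitting off the transcendental motive $\ttt(S)$, and carry out the Tate-twist bookkeeping on $h(X)^{\otimes m}$ yourself, obtaining the uniform threshold $j\ge 4m-3$ directly; this is in substance the proof of the cited black box, transplanted from $S^m$ to $X^m$. What your route buys is a self-contained argument that makes visible where the bounds $5$ and $9$ come from, and why the excluded middle range is genuinely out of reach (injectivity for $A^r_{(0)}(\ttt(S)^{\otimes r})$, $r\ge 2$, is conjectural); what it costs is the verification you flag but do not carry out, namely grade-compatibility of the refined splitting $h(S)={\mathds{1}}\oplus{\mathds{1}}(1)^{\oplus\rho}\oplus\ttt(S)\oplus{\mathds{1}}(2)$. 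That step is needed but routine: the projector onto the algebraic part of $h^2(S)$ is a sum of terms $\lambda_{ij}\, D_i\times D_j$ with $D_i$ divisors, hence lies in $A^2_{(0)}(S\times S)$ by multiplicativity of the product MCK (divisors on $S$ have grade $0$ since $A^1_{hom}(S)=0$), so the transcendental projector has pure grade $0$; combined with the grade-$0$ property of the blow-up correspondence supplied by proposition \ref{blowup}, the compatibility you need holds. Two cosmetic remarks: your twist $\ttt(S)(-1)$ is the paper's $h(S)(1)$ (opposite twist convention), and in the $r=1$ case the cleanest formulation is that $A^\ast(\ttt(S))$ is concentrated in codimension $2$ and grade $2$, so its grade-$0$ part vanishes outright.
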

 
 \begin{proof} Theorem \ref{main}, in combination with proposition \ref{product}, ensures that $X^m$ has an MCK decomposition, and so $A^\ast(X^m)$ has the structure of a bigraded ring under the intersection product. The corollary is now implied by the combination of the two following claims:

\begin{claim}\label{c1} There is inclusion
  \[ R^\ast(X^m)\ \ \subset\ A^\ast_{(0)}(X^m)\ .\]
  \end{claim}
  
 \begin{claim}\label{c2} The cycle class map induces injections
   \[ A^j_{(0)}(X^m)\ \hookrightarrow\ H^{2j}(X^m)\ \] 
   provided $m=1$, or $m=2$ and $j\ge 5$, or $m=3$ and $j\ge 9$.
\end{claim}

To prove claim \ref{c1}, we note that $A^k_{hom}(X)=0$ for $k\le 2$, which readily implies the equality $A^k(X)=A^k_{(0)}(X)$ for $k\le 2$. The fact that $c_3(T_X)$ is in $A^3_{(0)}(X)$ is part of theorem \ref{main}. The fact that $\Delta_X\in A^4_{(0)}(X\times X)$ is a general fact for any $X$ with a self--dual MCK decomposition \cite[Lemma 1.4]{SV2}. Since the projections $p_i$ and $p_{ij}$ are pure of grade $0$ \cite[Corollary 1.6]{SV2}, and $A^\ast_{(0)}(X^m)$ is a ring under the intersection product, this proves claim \ref{c1}.

To prove claim \ref{c2}, we observe that Manin's blow--up formula \cite[Theorem 2.8]{Sc} gives an isomorphism of motives
  \[ h(X)\cong h(S)(1)\oplus {\mathds{1}} \oplus  {\mathds{1}}(1)^{\oplus 4} \oplus {\mathds{1}}(2)^{\oplus 6}   \oplus  {\mathds{1}}(3)^{\oplus 4} \oplus  {\mathds{1}}(4)\ \ \ \hbox{in}\ \MM_{\rm rat}\ .\]
  Moreover, in view of proposition \ref{blowup} (cf. also \cite[Proposition 2.4]{SV2}), the correspondence inducing this isomorphism is of pure grade $0$.
  
 In particular, for any $m\in\NN$ we have isomorphisms of Chow groups
  \[ A^j(X^m)\cong A^{j-m}(S^m)\oplus \bigoplus_{k=0}^4  A^{j-m+1-k}(S^{m-1})^{b_k} \oplus \bigoplus A^\ast(S^{m-2})\oplus \bigoplus_{\ell\ge 3} A^\ast(S^{m-\ell})  \ ,  \] 
  and this isomorphism respects the $A^\ast_{(0)}()$ parts. Claim \ref{c2} now follows from the fact that for any surface $S$ with an MCK decomposition, and any $m\in\NN$, the cycle class map induces injections
  \[ A^i_{(0)}(S^m)\ \hookrightarrow\ H^{2i}(S^m)\ \ \ \forall i\ge 2m-1\ \]
  (this is noted in \cite[Introduction]{V6}, cf. also \cite[Proof of Lemma 2.20]{acs}).

\end{proof}

%
%
%
%
%
            
\vskip1cm
\begin{nonumberingt} Thanks to my colleagues Kai--kun and Len--kun at the Schiltigheim Centre for Pure Mathematics.

\end{nonumberingt}

\vskip1cm


\begin{thebibliography}{dlPG99}


\bibitem{Beau3} A. Beauville, On the splitting of the Bloch--Beilinson
		filtration, in: Algebraic cycles and motives (J. Nagel and C. Peters, editors), London Math. Soc. Lecture Notes 344, Cambridge University Press 2007,

\bibitem{BV} A. Beauville and C. Voisin, On the Chow ring of a $K3$ surface, J. Alg. Geom. 13 (2004), 417--426,




\bibitem{FM} E. Fatighenti and G. Mongardi, A note on a Griffiths--type ring for complete intersections in Grassmannians, arXiv:1801.09586v1,



\bibitem{FTV} L. Fu, Zh. Tian and Ch. Vial, Motivic hyperK\"ahler resolution conjecture: I. Generalized Kummer varieties, arXiv:1608.04968,

\bibitem{Ku} O. K\"uchle, On Fano $4$--folds of index $1$ and homogeneous vector bundles over Grassmannians, Math. Zeitschrift 218 (1995), 563---575, 

\bibitem{Kuz} A. Kuznetsov, On K\"uchle varieties with Picard number greater than $1$, Izvestiya RAN: Ser. Mat. 79:4 (2015), 57---70 (in Russian); translation in 
Izvestiya: Mathematics 79:4 (2015), 698---709,

\bibitem{acs} R. Laterveer, Algebraic cycles on some special hyperk\"ahler varieties, Rendiconti di Matematica e delle sue applicazioni 38 no. 2 (2017), 243---276, 

\bibitem{LV} R. Laterveer and Ch. Vial, On the Chow ring of Cynk--Hulek Calabi--Yau varieties and Schreieder
		varieties, arXiv:1712.03070,
		
\bibitem{Mur} J. Murre, On a conjectural filtration on the Chow groups of an
		algebraic variety, parts I and II, Indag. Math. 4 (1993), 177--201,

\bibitem{MNP} J. Murre, J. Nagel and C. Peters, Lectures on the theory of pure motives, Amer. Math. Soc. University Lecture Series 61, Providence 2013,


\bibitem{PSY} N. Pavic, J. Shen and Q. Yin, On O'Grady's generalized Franchetta conjecture, Int. Math. Res. Notices (2016), 1---13,

\bibitem{Sc} T. Scholl, Classical motives, in: Motives (U. Jannsen et alii, eds.), Proceedings of Symposia in Pure Mathematics Vol. 55 (1994), Part 1,

\bibitem{SV} M. Shen and Ch. Vial, The Fourier transform for certain hyperK\"ahler fourfolds, Memoirs of the AMS 240 (2016), 
		
\bibitem{SV2} M. Shen and Ch. Vial, On the motive of the Hilbert cube $X^{[3]}$, Forum Math. Sigma 4 (2016),
		
\bibitem{V6} Ch. Vial, On the motive of some hyperk\"ahler varieties, J. f\"ur Reine u. Angew. Math. 725 (2017), 235--247,



\end{thebibliography}
\end{document}